\newtheorem{theorem}{Theorem}[]
\newtheorem{lemma}[theorem]{Lemma}
\newtheorem{corollary}[theorem]{Corollary}
\theoremstyle{remark}
\newtheorem{remark}[theorem]{Remark}
\begin{document}
\title[Scaled Oscillation and Level Sets]{Scaled Oscillation and Level Sets}

\author{Iqra Altaf}
 \address{Department of Mathematics, The University of Chicago, 5734 S. University Avenue, Chicago, IL 60637, USA. email:
iqra@math.uchicago.edu.}
\thanks{}
 \author{Marianna Cs\"ornyei}
 \address{Department of Mathematics, The University of Chicago, 5734 S. University Avenue, Chicago, IL 60637, USA. email:
csornyei@math.uchicago.edu.}
\thanks{}
\author{Bobby Wilson }
\address{University of Washington, Department of Mathematics, Box 354350,  Seattle, WA 98195-4350, USA. email:
blwilson@uw.edu.}
\thanks{B. Wilson was  supported by NSF  grant  DMS 1856124.}

\subjclass[2020]{26A16, 28A80}

\keywords{}

\date{\today} 

\begin{abstract}
We study the size and regularity properties of level sets of continuous functions with bounded upper-scaled and lower-scaled oscillation.
\end{abstract}
\maketitle

\section{Introduction}
Let $f: A\rightarrow \mathbb{R}^k$, $A \subset [0, 1]^m$, be a Lipschitz continuous function. According to Rademacher's theorem, a Lipschitz function is differentiable almost everywhere in its domain. In fact, Stepanov's theorem implies that it suffices to assume that the upper-scaled oscillation (defined below) of a function is finite almost everywhere in order to conclude that it is differentiable almost everywhere.

        Assuming $m \geq k$, another well-known regularity property of a Lipschitz function, $f :A\rightarrow \mathbb{R}^k$, is that for  Lebesgue almost every $z \in \mathbb{R}^k$,  the level set $f^{-1}(z)$ is countably $(m-k)$-rectifiable and $\mathcal{H}^{m-k}(f^{-1}(z)) < \infty$. In fact, it suffices to prescribe that 
 the graph of $f$ has finite ${\mathcal{H}}^{m-k}$ measure then almost all of its level sets have finite ${\mathcal{H}}^{m-k}$ measure. For the more regular class of $C^n$ functions one has the stronger theorem, known as Sard's theorem, which states that if $f : \mathbb{R}^{m} \rightarrow \mathbb{R}^{k}$ is $n$-times continuously differentiable for $n\ge \max\{m-k+1,1\}$ and 
 	\begin{align*}
 		S= \{ x \in \mathbb{R}^{m}: \mbox{rank}(\nabla f(x))<k\},
	\end{align*}
	then $f(S)$ has Lebesgue measure zero.

   In order to demonstrate necessary and sufficient conditions on the vector field of the continuity equation that would ensure uniqueness of solutions, Alberti, Bianchini, and Crippa \cite{alberti2014uniqueness} define a notion of a ``weak Sard" property. Specifically, they showed in \cite{alberti2014uniqueness} that if the vector field, $b:[0, T] \times \mathbb{R}^2 \rightarrow \mathbb{R}^2$, is divergence-free and  bounded then the  following Cauchy problem
	\begin{align*}
		\left\{\begin{array}{l}
		\partial_t u = -\mbox{div}(bu) \\ 
				u(0)=u_0 \in W^{1, \infty}(\mathbb{R}^2) 
				\end{array}\right.
	\end{align*}
	has a unique weak solution in $C([0, T]; L^{\infty})$ if and only if the potential of $b$, $f$ (defined by satisfying $b= \nabla^{\perp} f$), satisfies the weak Sard property. The function $f$ is said to satisfy the weak Sard property if any measure $\mu$, supported on $f(S\cap E)$, is mutually singular to the Lebesgue measure on $\mathbb{R}^k$, where $E$ is ``the union of all connected components with positive
        length of all level sets." Moreover, in \cite{ABC}, Alberti, et. al. show that the minimal regularity for the weak Sard property to hold is $f \in W^{2, 1}(\mathbb{R}^2)$. Initially, Alberti \cite{alberti2012generalized}, taking into account that $W^{2, 1}$ functions are not necessarily differentiable, proved that the size of the image of $S \cup N$, where 
        \begin{align*}
        		N = \{ x\in  \mathbb{R}^2 : \nabla f \mbox{ does not exist}\},
        	\end{align*} 
        	is Lebesgue negligible.  Further analysis discussed in \cite{alberti2014uniqueness} and \cite{ABC} demonstrates that more regularity for $f$ is necessary to achieve equivalent results for the higher-dimensional case, $f: \mathbb{R}^m \rightarrow \mathbb{R}^{m-1}$. Specifically, the authors show that one needs $f \in C^{1, 1/2}$.

        In this paper, we consider the characterization of $f(S \cup N)$ and the characterization of the level sets of $f$ when $f$ satisfies the following ``local Lipschitz" conditions. For $x \in A$, the  upper-scaled and lower-scaled oscillation of $f$ are defined as
	\begin{align*}
		L_f(x) &= \limsup_{r \rightarrow 0}\frac{{\sup_{d(x,y)\leq r}}|f(y)-f(x)|}{r} \hspace{1cm} \mbox{ and}\\
		l_f(x) &= \liminf_{r \rightarrow 0}\frac{{\sup_{d(x,y)\leq r}}|f(y)-f(x)|}{r}
  	\end{align*}
        respectively.
The differentiability properties of functions satisfying either $L_f(x) <\infty$ or $l_f(x)<\infty$ on some suitably sized subset of the domain of $f$ were studied initially in \cite{balogh2006scaled} and then further studied in \cite{csornyei2016tangents}. In \cite{balogh2006scaled}, the authors show that $L^p$ boundedness of the function, $l_f$, implies Sobolev regularity for the function, $f$. Moreover, one simply needs the boundedness condition, $l_f(x)<\infty$, in order to prove the existence of a set of positive measure on which the function, $f: [0, 1] \rightarrow \mathbb{R}$, is differentiable. In \cite{csornyei2016tangents}, among other results, the authors show that for higher-dimensional domains, $f: [0, 1]^m \rightarrow \mathbb{R}$, $l_f(x) < \infty$ implies the existence of positive measure sets of differentiability for $f$.

One can ask to what extent the weak Sard property fails for functions with such low regularity. In consideration of this question, we construct continuous functions with finite lower-scaled oscillation at each $x$ whose level sets are ``large''. First, we present our result in dimension 1. 

\begin{theorem}\label{OneD}
	There exists a continuous function $f:[0,1] \rightarrow[0,1]$ such that $l_{f}(x)<\infty$ for all $x \in[0,1]$ and $f^{-1}(y)$ is infinite for every $y \in[0,1]$.
\end{theorem}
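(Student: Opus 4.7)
My plan is to construct $f$ as a uniform limit of piecewise linear functions $f_n \colon [0,1] \to \mathbb{R}$, starting from $f_0(x) = x$ and iteratively inserting small zigzag perturbations. At stage $n \geq 1$, around each dyadic center $x_{n,k} = k/2^n$ with $k$ odd in $\{1, 3, \ldots, 2^n - 1\}$, I add a continuous piecewise linear perturbation supported on a narrow interval of width $\delta_n = 4^{-n}$ around $x_{n,k}$, of amplitude $\epsilon_n = 2^{-n}$, vanishing at the boundary of its support so that continuity is preserved. The crucial feature of this choice of centers is that the reduced denominator of the odd-$k$ dyadic $k/2^n$ is exactly $2^n$, so each point of $[0,1]$ is a perturbation center for at most one stage; moreover, within a single stage the perturbations are pairwise disjoint. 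Summability of $\epsilon_n$ guarantees uniform convergence to a continuous limit, and an affine rescaling of the codomain then produces the required function $[0,1] \to [0,1]$.

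To see that $f^{-1}(y)$ is infinite for every $y$: at each stage $n$ the centers $\{x_{n,k}\}$ are $2^{-n+1}$-dense in $[0,1]$, and $f_{n-1}$ remains close to $f_0(x) = x$ uniformly, so for any fixed $y$ there is some $k$ with $|f_{n-1}(x_{n,k}) - y| < \epsilon_n$. The zigzag perturbation at $x_{n,k}$, having amplitude $\epsilon_n$, therefore crosses level $y$ at least twice inside its support, producing new preimages of $y$. Summing over infinitely many stages yields infinitely many preimages of $y$.

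The delicate point is verifying $l_f(x) < \infty$ at every $x \in [0,1]$. Since $l_f$ is a $\liminf$, I only need to exhibit, at each $x$, one scale $r > 0$ for which $\omega_f(x, r) = O(r)$. The key observation is that stage-$n$ perturbations are pairwise disjoint, so stage $n$ contributes at most $\epsilon_n$ to the oscillation of $f$ on any interval, no matter how many stage-$n$ perturbations that interval meets. For given $x$ and $r$, let $n_0 = n_0(x, r)$ be the largest $n$ such that no odd dyadic center $k/2^m$ with $m \leq n$ lies within $r$ of $x$. Then only stages $m > n_0$ can contribute to $\omega_f(x, r)$, giving
\[
\omega_f(x, r) \;\leq\; 2r + \sum_{m > n_0} \epsilon_m \;=\; 2r + 2^{-n_0}.
\]
To conclude, I would exhibit, for each $x$, a sequence of scales $r_k \to 0$ along which $r_k \geq c\cdot 2^{-n_0(x, r_k)}$ for some fixed $c > 0$, yielding $\omega_f(x, r_k) = O(r_k)$ and hence $l_f(x) < \infty$.

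The main obstacle is this last scale-selection step: for $x$ very well approximable by dyadic rationals, the quantity $n_0(x, r)$ can jump sharply as $r$ crosses thresholds determined by the approximation denominators, and at ``bad'' scales the ratio $\omega_f(x, r)/r$ is large. I would resolve this by choosing the scales $r_k$ to sit just below $D_m(x) := \min_{k \text{ odd}} |x - k/2^m|$ for a cofinal sequence of $m$ with $D_m(x) \geq c \cdot 2^{-m-1}$, which exists for every $x$ by elementary properties of dyadic approximation (for any $x$, infinitely many scales $m$ fail to realize the minimum possible distance up to a uniform constant). This gives the sought-for sequence of scales and thereby completes the argument.
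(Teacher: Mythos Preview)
Your approach is quite different from the paper's. The paper builds $f$ through a Whitney-type decomposition: at each stage a rectangle's diagonal is broken into pieces whose lengths are comparable to their distances from the endpoints, and this comparability is exactly the mechanism that produces a good sequence of scales for $l_f$ at every point. Your construction lacks such a mechanism, and the $l_f$ argument has a genuine gap.

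The assertion that ``only stages $m>n_0$ can contribute to $\omega_f(x,r)$'' is false whenever $x$ lies inside the support of some stage-$m$ perturbation with $m\le n_0$: that stage then contributes at \emph{every} scale $r>0$, regardless of whether its center lies within $r$ of $x$. With your parameters there exist points $x$ lying in infinitely many supports --- take $x$ whose binary digits satisfy $b_{m_k}=1$ along a sequence with $m_{k+1}>2m_k+1$ and $b_j=0$ otherwise; then $D_{m_k}(x)\approx 2^{-m_{k+1}}<2^{-2m_k-1}=\delta_{m_k}/2$, so $x$ is in the stage-$m_k$ support for every $k$. Near such $x$ each $g_{m_k}$ is linear with slope $-L_{m_k}\sim -2^{m_k}$, and for $r$ with $x+r$ still inside the supports of $g_{m_1},\dots,g_{m_K}$ but outside those of $g_{m_{K+1}},g_{m_{K+2}},\dots$ (namely $r\in(\delta_{m_{K+1}}/2,\,\delta_{m_K}/2)$) one gets
\[
f(x+r)-f(x)\;=\;r\Bigl(1-\sum_{k\le K}L_{m_k}\Bigr)\;-\;\sum_{k>K}g_{m_k}(x)\;+\;(\text{lower order}),
\]
both displayed terms negative and the first of order $2^{m_K}r$. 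These ranges of $r$ cover a full neighbourhood of $0$, so $|f(x+r)-f(x)|/r\gtrsim 2^{m_K}\to\infty$ and hence $l_f(x)=\infty$. Your scale selection via $D_m(x)$ cannot repair this: $D_m$ only measures distance to stage-$m$ centers, while the damaging contributions come from \emph{other} stages whose supports contain $x$.

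There is a secondary gap in the level-set argument: the claim that some center satisfies $|f_{n-1}(x_{n,k})-y|<\epsilon_n$ is not justified, since $|f_{n-1}-f_0|$ can be as large as $\sum_{m<n}\epsilon_m\approx 1\gg\epsilon_n$; and because $\sum_{m>n}\epsilon_m=\epsilon_n$ exactly equals the zigzag amplitude, crossings produced at stage $n$ are not guaranteed to survive to the limit $f$.
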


It is clear that for every $y \in [0, 1]$ in the above construction, $f^{-1}(y) \cap (N \cup S) \neq \emptyset$. One should also note that,  due to the argument in \cite{csornyei2016tangents} demonstrating regularity of such functions, countably infinite is the most that should be expected for almost every level set. For the higher dimensional case, we obtain the following result:

\begin{theorem}\label{HighD}
	 For $m \geq 2$ there exists a continuous function $f:[0,1]^{m} \rightarrow[0,1]$ such that $l_{f}(x)<\infty$ for all $x \in[0,1]^{m}$, and $f^{-1}(y)$ is not $(m-1)$-rectifiable for every $y \in[0,1]$.
\end{theorem}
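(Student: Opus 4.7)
My plan is to adapt the iterative construction underlying Theorem \ref{OneD} to $[0,1]^m$, aiming for a function whose level sets contain an $(m-1)$-dimensional purely unrectifiable fractal. The naive reduction $f(x_1,\dots,x_m) := g(x_1)$ with $g$ as in Theorem \ref{OneD} preserves $l_f < \infty$ but yields level sets that are unions of hyperplanes, hence trivially $(m-1)$-rectifiable. Instead, I would target level sets that contain an analogue of the Garnett four-corner Cantor set (or a product of Cantor sets spread across the coordinate directions) — a compact self-similar set with positive, finite $\mathcal{H}^{m-1}$ measure that is purely $(m-1)$-unrectifiable. Containment of such a set of positive $\mathcal{H}^{m-1}$ measure forces the level set itself to fail to be countably $(m-1)$-rectifiable, giving the conclusion.

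I would define $f = \lim_{n\to\infty} f_n$ as the uniform limit of piecewise linear approximations. Starting from a simple $f_0$, I would pass from $f_n$ to $f_{n+1}$ by partitioning $[0,1]^m$ into a grid of cubes of side $\epsilon_n$, with $\epsilon_n \downarrow 0$ rapidly, and inserting "ridge" or "pyramid" perturbations of amplitude comparable to $\epsilon_n$ in certain carefully chosen sub-cubes. The selection rule for these sub-cubes encodes a self-similar branching pattern; nested intersections of the selected sub-cubes trace out the target Cantor set inside each level set.

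To establish $l_f(x) < \infty$ for every $x$, I would exhibit, for each $x$, a sequence of radii $r_n \downarrow 0$ taken as "resting scales" between consecutive generations of perturbations, at which $f$ is uniformly close to the Lipschitz approximation $f_n$. The scaled oscillation at scale $r_n$ is then controlled by the Lipschitz constant of $f_n$, provided the parameters are tuned so these constants remain uniformly bounded and every $x$ meets a resting scale at infinitely many $n$. To establish that $f^{-1}(y)$ is not $(m-1)$-rectifiable for every $y \in [0,1]$, I would show that the value $y$ is always realized along a nested sequence of selected sub-cubes dictated by the branching pattern, and that the resulting limit set, embedded in $f^{-1}(y)$, is bi-Lipschitz to the purely unrectifiable Cantor model.

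The main obstacle is the tension between the two requirements: bounded $l_f$ favors having only one scale of perturbation "visible" at each point, while embedding a fractal into every level set requires perturbations at every scale throughout the domain. Resolving this should amount to choosing the perturbation supports and amplitudes so that (i) the scaled oscillation at scale $\epsilon_n$ picks up only a bounded contribution from the $n$-th generation and negligible contributions from the finer and coarser generations, and (ii) the geometric arrangement of supports at each stage is dense enough to enforce the self-similar branching — and hence the purely unrectifiable structure — in every level set.
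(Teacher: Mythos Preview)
Your outline is in the right spirit --- iterative construction, ``resting scales'' to bound $l_f$, and a Cantor-type fractal embedded in each level set --- and this is broadly what the paper does. But two concrete mechanisms are missing, and without them the sketch does not close.

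First, you do not explain how to guarantee that \emph{every} $y\in[0,1]$ (not merely almost every) sees the unrectifiable piece. With a naive grid-and-selection rule, the values realized by the fractal part form a nested intersection of sub-intervals in the range, and one must arrange these to cover all of $[0,1]$ at every stage while keeping the sub-cubes separated in the domain. The paper handles this by a labeling device: inside each Whitney cube $C_j$, the $N^m$ selected sub-cubes $C_{j,k}$ are given $N$ labels so that (i) cubes of the same label are $2\sqrt{m}\,l(C_{j,k})$-separated, and (ii) for each label $\omega$ the union of cubes of that label has full projection onto the $(x_2,\dots,x_m)$-hyperplane. The height interval $[z_j,z_j+h_j]$ is split into $N$ equal parts, one per label, and the cuboid over a cube of label $\omega$ sits over the sub-interval $I_{j,\omega}$. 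Property (ii) forces the limit set $F_z$ inside each level set to have full $\mathbb{P}_y$-projection, hence $\mathcal{H}^{m-1}(F_z)>0$; property (i) forces $F_z\cap B(x,2r_n)=F_z\cap B(x,r_n)$ at the child scale $r_n$, so the $(m-1)$-density at $x$ oscillates by a factor $2^{m-1}$ and cannot equal $1$, and Theorem~\ref{densitythm} gives unrectifiability directly. Your ``bi-Lipschitz to a model Cantor set'' route would require additional justification, whereas the density argument is immediate once the labeling is in place. Values $y$ landing on the top or bottom faces of some cuboid are handled separately by flattening $f$ there so that those level sets acquire a piece of Hausdorff dimension $m$.

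Second, a uniform grid with $\epsilon_n\downarrow 0$ does not by itself control $l_f$ at points on $\partial[0,1]^m$ or on the boundaries of the nested sub-cubes, where arbitrarily small children accumulate. The paper uses a Whitney decomposition of each cube at every stage: child cubes near the boundary shrink in proportion to their distance to it, which is exactly what makes the argument of Theorem~\ref{OneD} carry over --- the oscillation over a child cuboid is bounded by its distance to the boundary of the parent, so $|f(y)-f(x)|\le c|y-x|$ whenever $(x,f(x))$ lies on a boundary diagonal. Your resting-scales argument, as stated, controls $l_f$ only at points that remain in the interior of selected cubes at every generation, not at the accumulation points of the construction.
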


It is important to emphasize that the level sets in the above construction are not purely unrectifiable although they are unrectifiable. In fact, the existence of positive measure subsets on which such functions are differentiable,  \cite{csornyei2016tangents}, implies that generic level sets will have rectifiable parts. 

Trivially, if we take the construction, $f: [0, 1]^m \rightarrow [0,1]$, from Theorem \ref{OneD} or Theorem \ref{HighD} and construct the function, $g: [0, 1]^{\ell+m}  \rightarrow [0, 1]^{\ell+1}$, defined by $g(z, x):= (z, f(x))$ for $x \in [0, 1]^m$ and $z \in [0, 1]^{\ell}$, we have the following corollary:

	\begin{corollary}
          \begin{enumerate}
            \item For $m=k\ge 1$, there exists a continuous function $f:[0,1]^{m} \rightarrow[0,1]^k$ such that $l_{f}(x)<\infty$ for all $x \in[0,1]^{m}$, and $f^{-1}(y)$ is infinite for all  $y \in[0,1]^k$.
            \item For $m>k\geq 1$, there exists a continuous function $f:[0,1]^{m} \rightarrow[0,1]^k$ such that $l_{f}(x)<\infty$ for all $x \in[0,1]^{m}$, and $f^{-1}(y)$ is not $(m-k)$-rectifiable for all  $y \in[0,1]^k$.
            \end{enumerate}
          \end{corollary}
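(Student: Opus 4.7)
The plan is to verify directly that the trivial extension $g(z,x)=(z,f(x))$ described just above the corollary does the job in both cases. I must first choose parameters appropriately: for (1), I take $f:[0,1]\to[0,1]$ from Theorem \ref{OneD} and set $\ell=m-1$; for (2), I take $f:[0,1]^{m-k+1}\to[0,1]$ from Theorem \ref{HighD} (legitimate because $m-k+1\ge 2$) and set $\ell=k-1$. In both cases $g:[0,1]^m\to[0,1]^k$.

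The first step is to identify the level sets. For any $y=(y_1,\dots,y_k)\in[0,1]^k$, the set $g^{-1}(y)$ is the Cartesian product $\{(y_1,\dots,y_{k-1})\}\times f^{-1}(y_k)$. In case (1), this is immediately infinite since $f^{-1}(y_k)$ is. In case (2), I need to observe that the slice $\{(y_1,\dots,y_{k-1})\}\times[0,1]^{m-k+1}$ is isometric to $[0,1]^{m-k+1}$ via coordinate projection, so any countably $(m-k)$-rectifiable subset of the slice would correspond to a countably $(m-k)$-rectifiable subset of $[0,1]^{m-k+1}$; Theorem \ref{HighD} forbids this for $f^{-1}(y_k)$, so $g^{-1}(y)$ cannot be $(m-k)$-rectifiable either.

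The second step is the bound on $l_g$. Writing points of $[0,1]^m$ as $(z,x)\in[0,1]^\ell\times[0,1]^{m-\ell}$ and using that $d((z,x),(z',x'))\le r$ in the Euclidean metric forces both $|z-z'|\le r$ and $|x-x'|\le r$, a direct computation gives
$$\sup_{d((z,x),(z',x'))\le r}|g(z,x)-g(z',x')|\le\sqrt{r^2+\bigl(\sup_{|x'-x|\le r}|f(x')-f(x)|\bigr)^2}.$$
Dividing by $r$ and taking $\liminf$ as $r\to 0$ yields $l_g(z,x)\le\sqrt{1+l_f(x)^2}<\infty$, using the scaled oscillation bound supplied by the relevant source theorem.

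No step is genuinely hard; the only point deserving a word of care is the non-rectifiability claim in case (2), since rectifiability of a product is not in general equivalent to rectifiability of its factors. It goes through here because the relevant slice is isometric to $[0,1]^{m-k+1}$, and countable rectifiability is invariant under isometries.
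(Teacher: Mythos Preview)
Your argument is correct and follows exactly the approach the paper indicates: the paper simply states the construction $g(z,x)=(z,f(x))$ and calls the corollary ``trivial,'' while you have spelled out the parameter choices, the identification of the level sets, and the estimate $l_g(z,x)\le\sqrt{1+l_f(x)^2}$ that the paper leaves implicit. There is nothing to add.
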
 

Regarding functions with $L_{f}(x)< \infty$ for every $x \in \mathbb{R}^m$,
one can divide $\mathbb{R}^m$ into countably many parts, on each of which the function is $L$-Lipschitz for some $L$. Thus almost all level sets of such functions are rectifiable and have $\sigma$-finite ${\mathcal{H}}^{m-1}$ measure. However, there is a discrepancy between the characterization of level sets of functions with finite upper-scaled oscillation defined on $\mathbb{R}$, and in higher dimension. A simple construction (Remark \ref{Lfexample}) demonstrates that the level sets of functions, $f: [0, 1]^m \rightarrow \mathbb{R}$, satisfying  $L_{f}(x)< \infty$ for all $x \in [0,1]^m$ may have infinite $\mathcal{H}^{m-1}$ measure, while almost every level set must be finite when $m=1$. 

The structure of the paper is as follows: we begin with a discussion of the preliminary tools necessary to analyze functions with finite upper/lower scaled oscillation. In this preliminary section, we will also discuss our simple results concerning functions with finite upper-scaled oscillations. In the main part of our paper we will construct functions with finite lower-scaled oscillation that will prove Theorems \ref{OneD} and \ref{HighD}.

\section{Preliminaries}
Throughout the paper, we denote by $B(x,r) \subset \mathbb{R}^m$, the open ball of radius $r$ and center $x$. The Lebesgue measure on $\mathbb{R}^m$ is denoted by $\mathcal{L}^m$, and the $s$-dimensional Hausdorff measure is denoted by ${\mathcal{H}}^s$.
	 Recall that the density of $A$ in the ball $B(x,r)$ is defined by
	\begin{align*}
		{D}^{s}(A,x,r)=\frac{{{\mathcal{H}}^s(A\cap B(x,r))}}{(2r)^s},
	\end{align*}
	and the $s$-density of $A$ at the point $x$ is defined by
	\begin{align*}
		{D}^{s}(A,x)=\lim_{r \rightarrow 0} {D}^{s}(A,x,r)
	\end{align*}
if the limit exists.

A set $A \subset \mathbb{R}^m$ is called $s$-rectifiable if there exist $f_i :\mathbb{R}^{s}\rightarrow \mathbb{R}^{m}, i \in \mathbb{N}, $ such that 
	\begin{align*}
		{\mathcal{H}}^s\big(A\setminus \bigcup_{i=1}^{\infty}f_i(\mathbb{R}^s)\big)=0.
	\end{align*}
An unrectifiable set is a set which is not rectifiable. One characterization of rectifiability of sets is the density theorem of Marstrand and Mattila:

\begin{theorem}[Theorem 16.2 from \cite{mattila1999geometry}] \label{densitythm}
Let $A$ be an $ {\mathcal{H}}^s$ measurable subset of $\mathbb{R}^m$ such that ${\mathcal{H}}^s(A) < \infty.$ Then $A$ is $s$-rectifiable if and only if $D^{s}(A,x)=1$ for $ {\mathcal{H}}^s$ almost all $x\in A.$
\end{theorem}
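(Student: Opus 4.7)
The plan is to prove the two directions separately, since they have very different flavors: the forward direction (rectifiable implies density one) is essentially a consequence of the $C^1$-structure underlying rectifiable sets, while the reverse direction (density one implies rectifiable) requires substantially deeper machinery from geometric measure theory.

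For the forward implication, I would first unfold the definition of rectifiability to write $A$, up to an $\mathcal{H}^s$-null set, as a countable union of Lipschitz images $f_i(\mathbb{R}^s)$. Combining Rademacher's theorem with a Lusin-type refinement, these Lipschitz images may be further cut into pieces of $C^1$ $s$-dimensional submanifolds $M_i \subset \mathbb{R}^m$. At $\mathcal{H}^s$-a.e.\ point $x \in A \cap M_i$, the tangent plane $T_x M_i$ provides a first-order approximation of $M_i$ near $x$, so comparison with $\mathcal{L}^s$ on $T_x M_i$ yields $D^s(A \cap M_i, x) = 1$. A standard density-point argument then guarantees that at $\mathcal{H}^s$-a.e.\ such $x$, the contributions of the other pieces $A \cap M_j$ are negligible, so $D^s(A, x) = 1$.

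For the reverse implication, I would decompose $A = A_r \sqcup A_u$ into its rectifiable and purely unrectifiable parts and aim to show that $\mathcal{H}^s(A_u) = 0$. Standard density-point arguments reduce this to showing that a purely unrectifiable set cannot have density one $\mathcal{H}^s$-almost everywhere. The natural tool here is the Besicovitch--Federer projection theorem, which says that for a purely unrectifiable set of finite $\mathcal{H}^s$ measure in $\mathbb{R}^m$, the orthogonal projection onto almost every $s$-dimensional subspace has zero Lebesgue measure. The remaining task is to convert the density hypothesis into a quantitative statement about projections that is incompatible with this.

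I expect this last step to be the main obstacle. At a point $x \in A_u$ where the density equals one, one would like to extract an approximate tangent $s$-plane: a subspace $V$ such that the part of $A_u$ near $x$ lying far from $x + V$ has density zero. If this can be arranged, then projecting $A_u \cap B(x,r)$ onto $s$-planes close to $V$ would yield projections of nearly maximal Lebesgue measure for a positive-measure family of subspaces, contradicting Besicovitch--Federer. Bridging from the purely measure-theoretic hypothesis of density one to this geometric flatness statement is the technical heart of the Marstrand--Mattila theorem, and is where I anticipate spending the bulk of the argument, likely via an analysis of tangent measures or an iteration argument exploiting the density condition at successively smaller scales.
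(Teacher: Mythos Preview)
The paper does not prove this theorem at all: it is quoted verbatim as Theorem~16.2 from Mattila's book and used as a black box in the proof of Theorem~\ref{HighD}. So there is no ``paper's own proof'' to compare against; any proof you supply is necessarily beyond what the paper does.

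That said, a few comments on your outline. The forward direction is fine and is indeed the standard argument.

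For the reverse direction, your plan has a structural problem. You propose to reach a contradiction with Besicovitch--Federer by first extracting, at a density-one point of the purely unrectifiable part $A_u$, an approximate tangent $s$-plane. But the passage ``density one $\Rightarrow$ approximate tangent plane'' is precisely the content of the Marstrand--Mattila theorem; once you have approximate tangent $s$-planes $\mathcal{H}^s$-a.e., rectifiability follows directly (this is a separate, easier theorem in Mattila's book), and Besicovitch--Federer is never needed. In other words, the step you flag as ``the technical heart'' is the entire theorem, and the projection-theorem detour adds nothing. There is also a circularity risk: in most developments (including Mattila's), Besicovitch--Federer is proved \emph{after} and partly \emph{using} the rectifiability structure theory, so invoking it here may assume what you are trying to prove.

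The actual proofs (Marstrand for $s=1,2$, Mattila for general integer $s$) go directly from the density hypothesis to flatness via delicate geometric/combinatorial arguments on how balls of comparable measure must be arranged, without passing through projections. If you want to reconstruct a proof, that is the route to follow; see Chapters~14--16 of Mattila's book.
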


 Let $f : \mathbb{R}^m \rightarrow \mathbb{R}$ be a function and let $E$ be a subset of $\mathbb{R}^m$. We denote the graph of $f$ on $E$ by $G_{f}(E).$ 
	
	\begin{lemma} \label{scalelevel}
Let $f:[0,1]^m\rightarrow \mathbb{R}$ be a continuous function such that $l_{f}(x) < \infty$, for all $x\in [0,1]$. Then \\
(i) $G_{f}([0,1])$ has $\sigma$-finite ${\mathcal{H}}^{m}$-measure. \\
(ii) ${\mathcal{H}}^{m}\big(G_{f}(H)\big)=0$ for every set $H \subset [0,1]^m$ with $ {\mathcal{H}}^{m}(H)=0$. 
	\end{lemma}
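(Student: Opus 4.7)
The plan is to stratify the domain by the size of the lower-scaled oscillation and then apply a Vitali $5r$-covering argument on each stratum. For $N \in \mathbb{N}$, set $E_N = \{x \in [0,1]^m : l_f(x) \le N\}$, so that $[0,1]^m = \bigcup_N E_N$. For part (i) it suffices to prove $\mathcal{H}^m(G_f(E_N)) < \infty$ for each $N$, and for part (ii), since $\mathcal{H}^m$ and $\mathcal{L}^m$ agree up to a dimensional constant on $\mathbb{R}^m$, the hypothesis $\mathcal{H}^m(H) = 0$ is the same as $\mathcal{L}^m(H) = 0$, so it suffices to prove $\mathcal{H}^m(G_f(H \cap E_N)) = 0$ for each $N$.

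The key geometric estimate is as follows. For each $x \in E_N$ and each $\delta > 0$, by the definition of $l_f$ there exists a witness radius $\rho < \delta$ (obtained by scaling a witness to the liminf by $1/5$) such that
$$\sup_{y \in B(x, 5\rho) \cap [0,1]^m} |f(y) - f(x)| \le 5(N+1) \rho.$$
Consequently, $G_f(B(x, 5\rho) \cap [0,1]^m)$ lies in a cylinder of base radius $5\rho$ and vertical extent $10(N+1) \rho$, which we partition into $O(N)$ boxes of side $10\rho$, each of diameter at most $10\rho \sqrt{m+1}$. This yields a cover of $G_f(B(x, 5\rho) \cap [0,1]^m)$ contributing at most $C(N, m) \rho^m$ to any Hausdorff $m$-sum, where $C(N, m)$ is a constant independent of $\rho$ and $\delta$.

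For (i), fix $\delta > 0$ and apply the Vitali $5r$-covering lemma to the collection of balls $\{B(x, \rho(x))\}_{x \in E_N}$ from the previous paragraph. This yields a countable disjoint subfamily $\{B(x_i, \rho_i)\}$ with $E_N \subset \bigcup_i B(x_i, 5\rho_i)$, and disjointness together with containment in a bounded neighborhood of $[0,1]^m$ yields $\sum_i \rho_i^m \le C$ independent of $\delta$ (via the Lebesgue measure of the disjoint union). Combining with the geometric estimate gives $\mathcal{H}^m_{10\delta\sqrt{m+1}}(G_f(E_N)) \le C'(N, m)$, and letting $\delta \to 0$ yields $\mathcal{H}^m(G_f(E_N)) < \infty$. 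For (ii), apply the same strategy to $H_N = H \cap E_N$: given $\epsilon > 0$, choose an open $U \supset H_N$ with $\mathcal{L}^m(U) < \epsilon$ by outer regularity, and restrict the fine cover to balls $B(x, \rho)$ satisfying $B(x, 5\rho) \subset U$ (possible since $x \in U$ is open and $\rho$ can be arbitrarily small). The $5r$-covering lemma then yields disjoint $\{B(x_i, \rho_i)\} \subset U$ with $H_N \subset \bigcup B(x_i, 5\rho_i)$ and $\sum_i \rho_i^m \le C'' \epsilon$, whence $\mathcal{H}^m_{10\delta\sqrt{m+1}}(G_f(H_N)) \le C'''(N, m) \epsilon$. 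Fixing $\delta$ and letting $\epsilon \to 0$ shows this outer measure vanishes for every $\delta$; letting $\delta \to 0$ gives $\mathcal{H}^m(G_f(H_N)) = 0$, and summing over $N$ yields (ii).

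The main technical point is controlling the $\mathcal{H}^m$-measure rather than merely the content of the graph, which is handled by requiring the witness radii $\rho$ to be arbitrarily small (so the covering sets of the graph have arbitrarily small diameter) while keeping $\sum \rho_i^m$ uniformly bounded via the disjointness in the Vitali $5r$-covering lemma.
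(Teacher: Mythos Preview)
The paper does not actually prove this lemma; it simply cites \cite{csornyei2016tangents} for the argument. Your proposal is therefore more than the paper itself supplies, and the approach you give---stratifying by $E_N=\{x:l_f(x)\le N\}$, choosing for each $x\in E_N$ a witness radius $\rho<\delta$ with controlled oscillation on $B(x,5\rho)$, and then running the $5r$-covering lemma---is correct and is essentially the standard way such statements are proved (and is in the spirit of the argument in the cited reference). The two parts are handled cleanly: for (i) the uniform bound $\sum_i\rho_i^m\le C$ comes from disjointness of the Vitali subfamily inside a bounded set, and for (ii) you correctly localise the fine cover inside an open $U\supset H\cap E_N$ of small measure before extracting the disjoint subfamily, which is the step that converts the bound into a null estimate. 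One cosmetic remark: in the last paragraph you fix $\delta$ and send $\epsilon\to 0$, but since the witness radii can be taken below any $\delta$ while still lying in $U$, you may equally let $\delta\to 0$ first along the same cover; either order works.
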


We refer to \cite{csornyei2016tangents} for the proof of this statement.

As discussed in the introduction, it follows from part (i) of Lemma \ref{scalelevel} that if $f:[0,1]^{m} \rightarrow \mathbb{R}$ is a function such that $l_{f}(x)< \infty$ for all $x \in [0,1]^m$ then almost all level sets of $f$ have $\sigma$-finite ${\mathcal{H}}^{m-1}$ measure. For $m=1$, part (ii) of Lemma \ref{scalelevel} implies that $f$ satisfies the Lusin's condition. This leads to the following result.

	\begin{theorem} \label{BigLipLevelSets}
		Let $f: [0,1]\rightarrow \mathbb{R}$ be a continuous function such that $L_{f}(x)< \infty$ for all except countably many $x\in [0,1]$. Then  $f^{-1}(y)$ is finite for almost every $y \in \mathbb{R}$. 
	\end{theorem}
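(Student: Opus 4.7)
The plan is to bound
\[
A := \{y \in \mathbb{R} : f^{-1}(y) \text{ is infinite}\}
\]
by the image under $f$ of a ``recurrence set''. Since $[0,1]$ is compact, any infinite closed fiber $f^{-1}(y)$ has an accumulation point $p$, and continuity gives $f(p) = y$. Setting
\[
T := \{p \in [0,1] : \exists\, p_n \to p \text{ with } p_n \neq p \text{ and } f(p_n) = f(p)\},
\]
we have $A \subseteq f(T)$, so it suffices to show $|f(T)| = 0$.

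I would partition $T$ according to the local regularity of $f$. Let $C := \{x : L_f(x) = \infty\}$, which is countable by hypothesis, let $D$ be the subset of $[0,1]\setminus C$ where $f$ is not differentiable, and let $E$ be the subset where $f$ is differentiable. Stepanov's theorem applied on $[0,1]\setminus C$ (where $L_f < \infty$) gives $|D| = 0$. If $p \in T\cap E$ and $p_n \to p$, $p_n \neq p$, $f(p_n) = f(p)$, then dividing $0 = f(p_n) - f(p)$ by $p_n - p$ and using differentiability at $p$ forces $f'(p) = 0$; hence $T \cap E \subseteq \{f' = 0\}$.

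The sets $f(T \cap C)$ and $f(T \cap D)$ are both null: $T\cap C$ is countable, and $D$ is a Lebesgue-null subset of $[0,1]\setminus C$ on which $l_f \leq L_f < \infty$, so Lemma \ref{scalelevel}(ii) (yielding Lusin's property (N) for $f$) gives $|f(D)| = 0$. The main step is therefore to show that $|f(\{f'=0\})| = 0$, a classical Sard-type bound. Given $\epsilon > 0$, for each $x$ with $f'(x) = 0$ choose $\delta_x > 0$ so that $|f(y) - f(x)| \leq \epsilon |y - x|$ whenever $|y - x| \leq \delta_x$; the intervals $[x - r, x + r]$ with $r \leq \delta_x$ form a Vitali cover of $\{f' = 0\}$, and a disjoint countable subfamily $\{[x_j - r_j, x_j + r_j]\}$ covers the critical set up to a null residual $N$. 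The image of each such interval has length at most $2\epsilon r_j$, so summing and invoking Lusin (N) on $N$ yields $|f(\{f' = 0\})| \leq C\epsilon$ for an absolute constant $C$; sending $\epsilon \to 0$ finishes the proof.

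The main obstacle, and what makes the theorem subtle, is the gap between ``at most countable a.e.'' and ``finite a.e.''. A direct decomposition of $[0,1]$ into countably many sets on which $f$ is Lipschitz, together with the Banach indicatrix on each piece, only shows that almost every fiber is at most countable, since a countable union of a.e.-finite sets need not be finite. The recurrence-set argument closes this gap: at any accumulation point of a fiber, either $L_f = \infty$ (only countably many such points), or $f$ is non-differentiable at the point (a null set with null image by Lusin (N)), or $f$ is differentiable there with derivative zero (image null by the Sard estimate above).
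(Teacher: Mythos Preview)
Your proof is correct and follows exactly the paper's outline: an infinite fiber has an accumulation point, and at such a point either $L_f=\infty$ (countably many, so countable image), $f$ is non-differentiable (null set by Stepanov, null image by Lusin's~(N)), or $f'=0$ (null image by the elementary Sard/Vitali estimate you write out). Your closing paragraph explaining why the naive Lipschitz-decomposition plus Banach-indicatrix argument only yields \emph{countable} fibers is a nice addition that the paper does not make explicit.
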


This statement follows by first observing that if a level set were to have infinitely many points then it has at least one accumulation point. At this point the function either has a zero derivative or the derivative does not exist. Since the function is differentiable almost everywhere, Lusin's condition implies that the image of the set of points at which the function is not differentiable is measure zero. Furthermore, a standard argument implies that the set of points at which the derivative is zero also has null image. 

However, if we replace the countable exceptional set in Theorem \ref{BigLipLevelSets} with other exceptional sets, we cannot conclude that almost all level sets are finite. In fact, as described in the following remark, countability of the exceptional set is necessary.
	\begin{remark} \label{BigLipRemark}
		Let $E$ be an uncountable Borel set. There exists a continuous function $f:[0,1]\rightarrow [0,1]$ such that $L_{f}(x) < \infty$ for all $x \in [0,1]\setminus E $ and the level set $f^{-1}(y)$ has infinite cardinality for each $y \in [0,1]$.
	\end{remark}
\begin{center}
\begin{tikzpicture}[ line cap=round, line join=round ]
\begin{scope}[yscale = 8, xscale = 8]
\draw[thick] (0,0) -- (1,0) ;
	\draw[thick] (1/3,1/2) -- (1/3+1/12,1) ;
	\draw[thick] (2/3-1/12,0) -- (1/3+1/12,1) ;
	\draw[thick] (2/3-1/12,0) -- (2/3,1/2) ;
	\draw[thick] (0,0) -- (0,1) ;
	\draw[dashed] (1/3,0)--(1/3,1);
	\draw[dashed] (2/3,0)--(2/3,1);
	\draw[dashed] (1/3,1)--(2/3,1);
	\draw[dashed] (1/3,0)--(2/3,0);
	\end{scope}
	
\begin{scope}[yscale = 8/2, xscale = 8/3, shift={(0, 0)}]
\draw[thick] (0,0) -- (1,0) ;
	\draw[thick] (1/3,1/2) -- (1/3+1/12,1) ;
	\draw[thick] (2/3-1/12,0) -- (1/3+1/12,1) ;
	\draw[thick] (2/3-1/12,0) -- (2/3,1/2) ;
	\draw[thick] (0,0) -- (0,1) ;
	\draw[dashed] (1/3,0)--(1/3,1);
	\draw[dashed] (2/3,0)--(2/3,1);
	\draw[dashed] (1/3,1)--(2/3,1);
	\draw[dashed] (1/3,0)--(2/3,0);
	\end{scope}
	
\begin{scope}[yscale = 8/2, xscale = 8/3, shift={(2, 1)}]

	\draw[thick] (1/3,1/2) -- (1/3+1/12,1) ;
	\draw[thick] (2/3-1/12,0) -- (1/3+1/12,1) ;
	\draw[thick] (2/3-1/12,0) -- (2/3,1/2) ;
	
	\draw[dashed] (1/3,0)--(1/3,1);
	\draw[dashed] (2/3,0)--(2/3,1);
	\draw[dashed] (1/3,1)--(2/3,1);
	\draw[dashed] (1/3,0)--(2/3,0);
	\end{scope}
	
	\begin{scope}[yscale = 8/4, xscale = 8/9]

	\draw[thick] (1/3,1/2) -- (1/3+1/12,1) ;
	\draw[thick] (2/3-1/12,0) -- (1/3+1/12,1) ;
	\draw[thick] (2/3-1/12,0) -- (2/3,1/2) ;
	
	\draw[dashed] (1/3,0)--(1/3,1);
	\draw[dashed] (2/3,0)--(2/3,1);
	\draw[dashed] (1/3,1)--(2/3,1);
	\draw[dashed] (1/3,0)--(2/3,0);
	\end{scope}
	
	\begin{scope}[yscale = 8/4, xscale = 8/9, shift={(2, 1)}]

	\draw[thick] (1/3,1/2) -- (1/3+1/12,1) ;
	\draw[thick] (2/3-1/12,0) -- (1/3+1/12,1) ;
	\draw[thick] (2/3-1/12,0) -- (2/3,1/2) ;
	
	\draw[dashed] (1/3,0)--(1/3,1);
	\draw[dashed] (2/3,0)--(2/3,1);
	\draw[dashed] (1/3,1)--(2/3,1);
	\draw[dashed] (1/3,0)--(2/3,0);
	\end{scope}
	
	\begin{scope}[yscale = 8/4, xscale = 8/9, shift={(8, 3)}]

	\draw[thick] (1/3,1/2) -- (1/3+1/12,1) ;
	\draw[thick] (2/3-1/12,0) -- (1/3+1/12,1) ;
	\draw[thick] (2/3-1/12,0) -- (2/3,1/2) ;
	
	\draw[dashed] (1/3,0)--(1/3,1);
	\draw[dashed] (2/3,0)--(2/3,1);
	\draw[dashed] (1/3,1)--(2/3,1);
	\draw[dashed] (1/3,0)--(2/3,0);
	\end{scope}
	
	\begin{scope}[yscale = 8/4, xscale = 8/9, shift={(6, 2)}]

	\draw[thick] (1/3,1/2) -- (1/3+1/12,1) ;
	\draw[thick] (2/3-1/12,0) -- (1/3+1/12,1) ;
	\draw[thick] (2/3-1/12,0) -- (2/3,1/2) ;
	
	\draw[dashed] (1/3,0)--(1/3,1);
	\draw[dashed] (2/3,0)--(2/3,1);
	\draw[dashed] (1/3,1)--(2/3,1);
	\draw[dashed] (1/3,0)--(2/3,0);
	\end{scope}
\end{tikzpicture} 

    $$\textbf{Construction of the function when $E$ is the middle third cantor set.}   $$
\end{center}
In order to prove the remark, we first consider the case where $E$ is the middle-third Cantor set and consider the Cantor function over that set. For each of the $2^{n-1}$ intervals of size $3^{-n}$ in the complement of the Cantor set we replace the flat piece of the function defined on this interval with a piecewise linear function whose image is an interval of length $2^{-(n-1)}$ so that the intervals of this size cover the interval $[0, 1]$.  One can do this in a continuous way.

For an arbitrary Cantor set $E$, we can find a homeomorphism $h:\,\mathbb{R}\,\to \mathbb{R}$ that maps $E$ to the middle-third Cantor set, and which is linear on each component of the complement of $E$, and then compose this homeomorphism $h$ with the function $f$ we have constructed for the middle-third Cantor set. Finally, the statement of Remark \ref{BigLipRemark} follows since every uncountable Borel set contains a Cantor set.

\medskip

As discussed in the introduction, the characteristics of level sets of functions satisfying $L_f(x)<\infty$ is  heavily determined by the dimension of the domain. Although $L_f(x)<\infty$ for every $x$ guarantees that almost every level set will have finite cardinality in the one-dimensional case, in the two-dimensional case and higher, there is no such phenomenon as displayed in the following example.

\begin{remark} \label{Lfexample}
		There exists a continuous function $f:[0,1]^m\rightarrow [0,1]$ such that $L_{f}(x) < \infty$ for all $x \in [0,1]^m $ and the level sets $f^{-1}(y)$ have infinite ${\mathcal H}^{m-1}$-measure for all $y \in [0,1]$.
	\end{remark}
	
	\begin{proof}[Proof of Remark \ref{Lfexample}]
		It suffices to construct an example for $m=2$.  Consider the function $g: [0, 1] \rightarrow [0, 1]$ defined by 
			\begin{align*}
				g(x) = \left\{ \begin{array}{ll}  x\sin^2(x^{-1}) & x \in (0, 1]\\
									0 & x=0.   \end{array} \right.
			\end{align*}
		It is clear that $L_g(x) <\infty$ for all $x \in [0,1]$  and the graph of $g$ has infinite $\mathcal{H}^1$ measure. Now define 
			\begin{align*}
				f(x,y) = \left\{  \begin{array}{ll} 1 & y-g(x)\geq 1\\
								0 & y-g(x) \leq 0\\
								y-g(x) & \mbox{otherwise.}
								 \end{array} \right.
			\end{align*}
			Then for every $z \in [0,1]$, $f^{-1}(z)= \{ (x, y) \in [0,1]~:~ y=g(x) -z\}$ so the level sets have infinite length. Finally, 
			\begin{align*}
				f(x_1, y_1)-f(x_2, y_2) = y_1-y_2- (g(x_1)-g(x_2))
			\end{align*} 
			and thus $L_f(x,y) \leq 2\max( 1, L_g(x)) < \infty$ for all $(x, y)$.
	\end{proof}

%
%


\section{Lower-Scaled Oscillation}

In this section, we present the proofs to Theorems \ref{OneD} and \ref{HighD}. 

\begin{proof}[Proof of Theorem \ref{OneD}]

  Our construction of the function is based on an iterative argument on rectangles. 
  Assume we have a collection of rectangles with sides parallel to the coordinate axes. We will divide each rectangle into two parts vertically. We will define the function piece-wise linearly on the right rectangle. We will then divide the diagonal of the left rectangle into countably many line segments and consider the rectangles with these line segments as diagonals and sides parallel to the coordinate axes. These will form a new collection of rectangles. We will explain the details below.

  Let $\mathbb{P}_{x}$ and $\mathbb{P}_{y}$ be the projection maps onto the $x$-axis and $y$-axis respectively. For a rectangle $Q$ with sides parallel to the coordinate axes, we denote by $l(Q)$ and $h(Q)$ the length of its horizontal and vertical sides, respectively. We start with the square $Q_{0, 1}:=[0,1]^2$ and the function $f_0(x)=x$.

Assume that we have completed $n\in \mathbb{N}$ steps of the construction, and we have (at most) countably many rectangles $(Q_{n,i})_{i \in \mathbb{N}}$ and a function $f_n$, such that inside each $Q_{n,i}$ the graph of $f_n$ is the linear line segment connecting its bottom left and its top right corners. We will define $f_{n+1}$ by replacing this line segment inside each $Q_{n,i}$ by another continuous graph. Outside $\bigcup_{i=1}^{\infty}\mathbb{P}_{x}(Q_{n,i})$ we define $f_{n+1}=f_n$. 

Consider one of the rectangles $Q=Q_{n,i}$, and denote $Q=[x,x+l(Q)]\times [y,y+h(Q)]$.
We divide $Q$ vertically into two rectangles,
	\begin{align*}
		Q'=[x, x+(1-a_n)l(Q)] \times [y, y+h(Q)]
	\end{align*} 
	and 
	\begin{align*}
		Q''=[x+(1-a_n)l(Q),x+l(Q)] \times [y, y+h(Q)],
	\end{align*}
        where $0<a_n\ll1$ is to be chosen later. In $Q''$, we define $f_{n+1}$ to be an arbitrary piecewise linear function connecting its top left and top right corners, such that the graph of the function lies inside $Q''$, and it has a horizontal piece lying on the top and on the bottom side of $Q''$, respectively. Therefore the level sets $f_{n+1}^{-1}(y)$ and $f_{n+1}^{-1}(y+h(Q))$ are infinite, and the pre-image of each point in the open interval $(y,y+h(Q))$ has at least two points.


        In $Q'$, we define $f_{n+1}$ such that its graph is the diagonal of $Q'$ connecting its bottom left and top right corners. Then we apply the 1-dimensional Whitney decomposition theorem (see e.g, J.1 in Grafakos, \cite{grafakos2008classical}) to divide this diagonal into countably many line segments, such that the length of each of these line segment is comparable to its distance from the closest endpoint of the diagonal of $Q'$.
Corresponding to each line segment, we consider a rectangle with the line segment as its diagonal and sides parallel to the coordinate axes. We call these rectangles the child rectangles of $Q_{n,i}$.
 
We repeat this construction for each $Q_{n,i}$, $i\in \mathbb{N}$ and enumerate the countably many rectangles thus obtained as $\{Q_{n+1,j}\}_{j \in \mathbb{N}}$. This completes the $(n+1)th $ step. 

  \begin{center}
\begin{tikzpicture}[yscale = 8, xscale = 8, line cap=round, line join=round ]
	\draw[thin] (0,0) -- (.9,0) ;
	\draw[thin] (.9,0) -- (.95,0);
	\draw[thin] (.95,0) -- (1.1,0); 
	\draw[thin] (0,0) -- (0,1) ;
	\draw[very thick] (.9,1)  -- (.93,1);
	\draw[very thick] (1.06,1)  -- (1.1,1);
	\draw[very thick] (.96,0)  -- (1.02,0);
	\draw[very thick] (.93,1)  -- (.96,0) node[right,black]{} ;
	\draw[very thick] (1.02,0)  -- (1.06,1) node[right,black]{} ;
	\draw[thick] (.9,-.04)  -- (1,-.04) node[below]{\footnotesize $(1-a_n)l(Q)$} ;
	\draw[thick] (.9,-.04)  -- (.9,-.02);
	\draw[thick] (1.1,-.04)  -- (1.1,-.02);
	\draw[thick] (1,-.04)  -- (1.1,-.04)  ;



    \draw[dashed] (0.9,0) -- (0.9, 1);
    \draw[thin] (1.1,0) -- (1.1, 1) ;
    \draw[thin] (0,1) -- (1.1, 1);
    \draw[very thick] (0,0) -- (.9, 1);
    
    \draw[thin] (0.3,1/3) -- (0.3, 2/3);
    \draw[thin] (0.6,1/3) -- (0.6, 2/3);
    \draw[thin] (0.3,1/3) -- (0.6, 1/3);
    \draw[thin] (0.3,2/3) -- (0.6, 2/3);
    
    \draw[thin] (0.15,1/6) -- (0.15, 1/3);
    \draw[thin] (0.3,1/6) -- (0.3, 1/3);
    \draw[thin] (0.15,1/3) -- (0.3, 1/3);
    \draw[thin] (0.15,1/6) -- (0.3, 1/6);
    
    \draw[thin] (0.6,2/3) -- (0.75, 2/3);
    \draw[thin] (0.75,2/3) -- (0.75, 5/6);
    \draw[thin] (0.6,2/3) -- (0.6, 5/6);
    \draw[thin] (0.6,5/6) -- (0.75, 5/6);
    
    \draw[thin] (0.75,5/6) -- (0.75, 85/90);
    \draw[thin] (0.75,5/6) -- (0.85, 5/6);
    \draw[thin] (0.85,5/6) -- (0.85, 85/90);
    \draw[thin] (0.75,85/90) -- (0.85, 85/90);
    
     \draw[thin] (0.05,1/18) -- (0.05, 1/6);
    \draw[thin] (0.15,1/18) -- (0.15, 1/6);
    \draw[thin] (0.05,1/18) -- (0.15, 1/18);
    \draw[thin] (0.05,1/6) -- (0.15, 1/6);
    
    \draw[thin] (0.05,1/18) -- (0.05, 1/6);
    \draw[thin] (0.15,1/18) -- (0.15, 1/6);
    \draw[thin] (0.05,1/18) -- (0.15, 1/18);
    \draw[thin] (0.05,1/6) -- (0.15, 1/6);
     \draw[thin] (0.01,1/18) -- (0.05, 1/18);
    \draw[thin] (0.01,.1/9) -- (0.05, .1/9);
    \draw[thin] (0.05,.1/9) -- (0.05, 1/18);
    \draw[thin] (0.01,.1/9) -- (0.01, 1/18);
    
    \draw[thin] (0.85,85/90) -- (0.89, 85/90);
    \draw[thin] (0.85,89/90) -- (0.89, 89/90);
    \draw[thin] (0.85,85/90) -- (0.85, 89/90);
    \draw[thin] (0.89,85/90) -- (0.89, 89/90);
    \draw (0.45,0) +(0.45cm, 0) node[below] {};
    \draw (0.51,0) +(0.51 cm, 0) node[below] {};
    \draw (0.0,0) +(0.0 cm, 0) node[below]{\footnotesize $(x,y)$};
    \end{tikzpicture} 
    $$\textbf{Figure 1}:  (n+1)^{th} \text{ step on } Q_{n,i} $$
\end{center}

It is clear from our construction that the functions $f_n$ converge to a continuous function $f$, and that $f$ takes every value $y_0$ between 0 and 1 at infinitely many points. Indeed, if $y_0$ is the $y$ coordinate of one of the vertices of one of the rectangles $Q$ of our construction, then it takes this value at infinitely many points in its right rectangle $Q''$. And if it is not the $y$ coordinate of any of the vertices of any of the rectangles, then for each $n$ we can choose a $Q_{n,i}$ s.t. $y_0\in\mathbb{P}_y(Q_{n,i})$. Then $f^{-1}(y_0)$ has at least two points in $\mathbb{P}_x(Q''_{n,i})$. Since this is true for each $n$, and the sets $\mathbb{P}_x(Q''_{n,i})$ are pairwise disjoint, therefore $f^{-1}(y_0)$ is infinite.

 
%

It remains to prove that the lower-scalled oscillation $l_{f}(x)$ is finite for each $x \in [0,1]$.  The restriction of $f$ onto the interval $\mathbb{P}_x(Q _{n,i}'')$ is Lipschitz because $f$ is piecewise linear inside each rectangle $Q_{n,i}''$.

Inside each of the rectangles  $Q_{n,i}'$, the graph of $f$ is covered by its child rectangles. We observe that for each fixed $n \in \mathbb{N}$, all the rectangles $Q_{n,i}$ are similar. Moreover the  height/length ratio at the $(n+1)th$ step increases by a factor of $1/(1-a_{n})$. Thus for all the rectangles, the  height/length ratio is bounded by $\prod_{i=0}^{\infty}(1-a_{n})^{-1}$. We choose $a_n$ small enough such that $\prod_{i=0}^{\infty}(1-a_{n})^{-1}< \infty$. \par Then selecting the diagonals of the rectangles by applying the Whitney decomposition theorem implies that there is a constant $c$ satisfying the following: if $(x,f(x))$ is one of the vertices of $Q_{n,i}'$, and $(x',f(x'))$ is an interior point of $Q_{n,i}'$, then $|f(x)-f(x')|\le c|x-x'|$. This shows that the lower scaled oscillation is finite at $x$. Putting $r:=|x'-x|$, it also implies that for any $x''\in \mathbb{P}_x (Q_{n,i}')$, if $|x''-x'|\le r$ then 
	\begin{equation}\label{3cr}
		|f(x'')-f(x')|\le|f(x'')-f(x)|+|f(x')-f(x)|\le c|x''-x|+c|x'-x|\le 3c|x'-x|=3cr.
	\end{equation}
If $(x',f(x'))$ is not a vertex of any rectangle of our construction, then for each $n$ it is in the interior of a rectangle $Q_{n,i}'$. By selecting $(x,f(x))$ to be the endpoint of its diagonal closest to $x'$ and putting $r=|x'-x|$, \eqref{3cr} holds for every $x''$ with  $|x''-x'|\le r$. Since this estimate holds for an arbitrary small $r$, therefore $l_f(x')\le 3c$.
\end{proof}
Now we turn to the proof of Theorem \ref{HighD}.
\begin{proof}[Proof of Theorem \ref{HighD}] Our construction of the function is based on an iterative argument on cuboids in $\mathbb{R}^{m+1}$. By a cuboid, $Q$, in $\mathbb{R}^{m+1}$, we mean a Cartesian product of an axis-parallel cube in $\mathbb{R}^m$ and an interval. For a cube, $C$, in $\mathbb{R}^m$ we denote its side-length by $l(C)$, and for a cuboid $Q =C \times I$, we define the length, $l(Q)$, by $l(Q):=l(C)$ and the height, $h(Q)$, by $h(Q):=l(I)$. We write an element in $\mathbb{R}^{m+1}$ as $(x,z)=(x_1,..,x_m,z)$. If $(x,z)$ lies on the graph of the function we are going to construct, then the first $m$ coordinates denote the domain and $z$ denotes the value of the function. We will also denote $y$ as $y=(x_2,..,x_m)$. The direction of $(0,..,0,1)$ is called the $z$-axis. The projection maps onto the $m$-dimensional plane $z=0$, the $(m-1)$-dimensional plane $x_1=z=0$ and the $z$-axis will be denoted by $\mathbb{P}_{x}$, $\mathbb{P}_{y}$ and $\mathbb{P}_{z}$ respectively.

We start with the cube $[0,1]^{m+1}$, $E_{1}=\emptyset$, and a sequence $(a_n)_{n=0}^{\infty}$ satisfying $\prod_{n=0}^{\infty}(1-a_{n})^{-1}< 2$. Assume that we have completed $n$ steps of our iterative argument and we have countably many cuboids $(Q_{n,i})_{i \in \mathbb{N}}$ in $\mathbb{R}^{m+1}$ and that $f$ is defined on $E_{n}=[0,1]^m \setminus  \mathbb{P}_{x}(\bigcup_{i=1}^{\infty}Q_{n,i})$.

Let $Q=Q_{n,i}=C\times [z,z+h]$ be such a cuboid. Let $\alpha=h(Q)/l(Q)=h/l(C)$ and let $D$ be its diagonal plane parallel to the plane $z=\alpha x_1$. We apply our iterative argument on $Q$ whose key parts are the following: 
\begin{enumerate}
	\item[\textbf{Part (1)}] We apply the Whitney decomposition theorem to $C$ and decompose it into countably many Whitney cubes $\{C_j\}_{j\in \mathbb{N}}$ such that the diameter of each cube is less than the distance to the boundary of $C$ and at least a constant times the distance to the boundary of $C$.
	\item[\textbf{Part (2)}] Let $C_j$ be a Whitney cube obtained in Part (1) with length $l(C_j)$. We partition the smaller cube with the same center as $C_j$ and length $(1-a_n)^{1/2}l(C_j)$ into $N^m$ equal cubes of length $(1-a_n)^{1/2}l(C_j)/{N}$ each, where $N$ will be chosen later. Inside each of these $N^{m}$ cubes, we consider a smaller cube with the same center and length $(1-a_n)l(C_j)/{N}$. Let these smaller cubes be denoted as $C_{j,k}$, $k\in \{1, 2, ..., N^{m}\}$. We chose $N$ large enough so that we can label $C_{j,k}$ with $N$ different labels such that
	\begin{enumerate}
		\item[(i)] For any cube with a label, $\omega$, and any point $x$ in the cube, the ball of centre $x$ and radius $2\sqrt{m}\cdot l(C_{j, k})$ intersects at most one cube of each label. Moreover, we choose $N$ to be large enough so that this ball lies in the cube $C_j$.  
		\item[(ii)] For every label, $\omega$, the union of cubes of label, $\omega$, has full $\mathbb{P}_{y}$ projection. In other words, we require that $\mathbb{P}_y(C_j)= \mathbb{P}_y( \bigcup_{k \in L_{\omega}} C_{j, k})$ where $L_{\omega}$ is the set of all indices $k$ such that  $C_{j, k}$ has label $\omega$.
	\end{enumerate}
Thus we divided $C_{j}$ into $N^{m}$ labeled cubes and narrow strips, i.e, the part of $C_{j}$ which is not covered by labeled cubes. (See Figure 3 for $m=2$). 
\begin{center}
\begin{tikzpicture}

xlabel=Subsequential Treatments over Time ,

   \newcommand\Square[1]{+(-#1,-#1) rectangle +(#1,#1)}
    \draw [ ](3,2) \Square{145pt} ;
     \draw [](-1,-2) \Square{10pt}node[pos=.5]{1};
     \draw [](0,-2) \Square{10pt}node[pos=.5]{2};
     \draw [](1,-2) \Square{10pt}node[pos=.5]{3};
     \draw [](2,-2) \Square{10pt}node[pos=.5]{4};
     \draw [](3,-2) \Square{10pt}node[pos=.5]{5};
     \draw [](4,-2) \Square{10pt}node[pos=.5]{6};
     \draw [](5,-2) \Square{10pt}node[pos=.5]{7};
     \draw [](6,-2) \Square{10pt}node[pos=.5]{8};
     \draw [](7,-2) \Square{10pt}node[pos=.5]{9};
   
   \draw [](-1,-1) \Square{10pt}node[pos=.5]{7};
     \draw [](0,-1) \Square{10pt}node[pos=.5]{8};
     \draw [](1,-1) \Square{10pt}node[pos=.5]{9};
     \draw [](2,-1) \Square{10pt}node[pos=.5]{1};
     \draw [](3,-1) \Square{10pt}node[pos=.5]{2};
     \draw [](4,-1) \Square{10pt}node[pos=.5]{3};
     \draw [](5,-1) \Square{10pt}node[pos=.5]{4};
     \draw [](6,-1) \Square{10pt}node[pos=.5]{5};
     \draw [](7,-1) \Square{10pt}node[pos=.5]{6};
     
     \draw [](-1,0) \Square{10pt}node[pos=.5]{4};
     \draw [](0,0) \Square{10pt}node[pos=.5]{5};
     \draw [](1,0) \Square{10pt}node[pos=.5]{6};
     \draw [](2,0) \Square{10pt}node[pos=.5]{7};
     \draw [](3,0) \Square{10pt}node[pos=.5]{8};
     \draw [](4,0) \Square{10pt}node[pos=.5]{9};
     \draw [](5,0) \Square{10pt}node[pos=.5]{1};
     \draw [](6,0) \Square{10pt}node[pos=.5]{2};
     \draw [](7,0) \Square{10pt}node[pos=.5]{3};
     
     \draw [](-1,1) \Square{10pt}node[pos=.5]{1};
     \draw [](0,1) \Square{10pt}node[pos=.5]{2};
     \draw [](1,1) \Square{10pt}node[pos=.5]{3};
     \draw [](2,1) \Square{10pt}node[pos=.5]{4};
     \draw [](3,1) \Square{10pt}node[pos=.5]{5};
     \draw [](4,1) \Square{10pt}node[pos=.5]{6};
     \draw [](5,1) \Square{10pt}node[pos=.5]{7};
     \draw [](6,1) \Square{10pt}node[pos=.5]{8};
     \draw [](7,1) \Square{10pt}node[pos=.5]{9};
   
   \draw [](-1,2) \Square{10pt}node[pos=.5]{7};
     \draw [](0,2) \Square{10pt}node[pos=.5]{8};
     \draw [](1,2) \Square{10pt}node[pos=.5]{9};
     \draw [](2,2) \Square{10pt}node[pos=.5]{1};
     \draw [](3,2) \Square{10pt}node[pos=.5]{2};
     \draw [](4,2) \Square{10pt}node[pos=.5]{3};
     \draw [](5,2) \Square{10pt}node[pos=.5]{4};
     \draw [](6,2) \Square{10pt}node[pos=.5]{5};
     \draw [](7,2) \Square{10pt}node[pos=.5]{6};
     
     \draw [](-1,3) \Square{10pt}node[pos=.5]{4};
     \draw [](0,3) \Square{10pt}node[pos=.5]{5};
     \draw [](1,3) \Square{10pt}node[pos=.5]{6};
     \draw [](2,3) \Square{10pt}node[pos=.5]{7};
     \draw [](3,3) \Square{10pt}node[pos=.5]{8};
     \draw [](4,3) \Square{10pt}node[pos=.5]{9};
     \draw [](5,3) \Square{10pt}node[pos=.5]{1};
     \draw [](6,3) \Square{10pt}node[pos=.5]{2};
     \draw [](7,3) \Square{10pt}node[pos=.5]{3};
     
      \draw [](-1,4) \Square{10pt}node[pos=.5]{1};
     \draw [](0,4) \Square{10pt}node[pos=.5]{2};
     \draw [](1,4) \Square{10pt}node[pos=.5]{3};
     \draw [](2,4) \Square{10pt}node[pos=.5]{4};
     \draw [](3,4) \Square{10pt}node[pos=.5]{5};
     \draw [](4,4) \Square{10pt}node[pos=.5]{6};
     \draw [](5,4) \Square{10pt}node[pos=.5]{7};
     \draw [](6,4) \Square{10pt}node[pos=.5]{8};
     \draw [](7,4) \Square{10pt}node[pos=.5]{9};
   
   \draw [](-1,5) \Square{10pt}node[pos=.5]{7};
     \draw [](0,5) \Square{10pt}node[pos=.5]{8};
     \draw [](1,5) \Square{10pt}node[pos=.5]{9};
     \draw [](2,5) \Square{10pt}node[pos=.5]{1};
     \draw [](3,5) \Square{10pt}node[pos=.5]{2};
     \draw [](4,5) \Square{10pt}node[pos=.5]{3};
     \draw [](5,5) \Square{10pt}node[pos=.5]{4};
     \draw [](6,5) \Square{10pt}node[pos=.5]{5};
     \draw [](7,5) \Square{10pt}node[pos=.5]{6};
     
     \draw [](-1,6) \Square{10pt}node[pos=.5]{4};
     \draw [](0,6) \Square{10pt}node[pos=.5]{5};
     \draw [](1,6) \Square{10pt}node[pos=.5]{6};
     \draw [](2,6) \Square{10pt}node[pos=.5]{7};
     \draw [](3,6) \Square{10pt}node[pos=.5]{8};
     \draw [](4,6) \Square{10pt}node[pos=.5]{9};
     \draw [](5,6) \Square{10pt}node[pos=.5]{1};
     \draw [](6,6) \Square{10pt}node[pos=.5]{2};
     \draw [](7,6) \Square{10pt}node[pos=.5]{3};

\draw [->,>=stealth] (-1.5,-3.6) -- (7.5,-3.6)node[below]{\footnotesize $x_1$-axis};
\draw [->,>=stealth] (-2.7,-2.5) -- (-2.7,6.5)node[left]{\footnotesize $y$-axis};

 \end{tikzpicture}    
 \end{center} 
$$\textbf{Figure  4}: \text{ Decomposition of } C_{j} \text{ into cubes with 9 labels and narrow strips.} $$
	\item[\textbf{Part (3)}] Let $D_{j}\subset D$ be the intersection of $D$ and the pre-image of  $C_j$ under the projection $\mathbb{P}_x$. Consider the cuboid $R_j$ in $\mathbb{R}^{m+1}$ with sides parallel to the coordinate axes and $D_j$ as its diagonal plane. We denote $R_{j}=C_j\times [z_j,z_j+h_j].$ We call the cuboids $\{R_{j}\}_{j \in \mathbb{N}}$ the child cuboids of $Q$. (See Figure 4 for $m=2$).

\begin{center}
\begin{tikzpicture}[ line cap=round, line join=round ]
\begin{scope}[yscale = .91, xscale = .91, shift={(0.5,-1.25)}]

\draw[thick, black] (3.3,-10) -- (5.8,8) ;
\draw[thick] (-12,-10) -- (-8.85,8) ;
\draw[thick, black] (3.3,-10) -- (-12,-10);
\draw[thick] (5.8,8) -- (-8.85,8) ;

\draw[black] (-11.6,-10) -- (-8.43,8) ;
\draw[black] (-11.2,-10) -- (-8.03,8) ;
\draw[black] (-10.67,-10) -- (-7.703,8) ;
\draw[black] (-9.27,-10) -- (-6.503,8) ;
\draw[black] (0.4,-10) -- (3.003,8) ;
\draw[black] (2.3,-10) -- (4.9,8) ;
\draw[black] (2.8,-10) -- (5.4,8) ;
\draw[black] (1.8,-10) -- (4.4,8) ;
\end{scope}
\begin{scope}[yscale = .25, xscale = .25, shift={(-7,-7)}]
\draw[line width=0.5mm, black] (-11,3) -- (3,3) ;
\draw[line width=0.5mm, black] (-11,-11) -- (3,-11) ;
\draw[line width=0.5mm, black] (-11,3) -- (-11,-11) ;
\draw[line width=0.5mm, black] (3,3) -- (3,-11) ;

\draw[line width=0.5mm, black] (-8,9) -- (6,9) ;
\draw[line width=0.5mm, black, dashed] (-8,-5) -- (6,-5) ;
\draw[line width=0.5mm, black, dashed] (-8,9) -- (-8,-5) ;
\draw[line width=0.5mm, black, dashed] (6,-5) -- (6,9) ;

\draw[line width=0.5mm, black, dashed] (-11,-11) -- (-8,-5) ;

\draw[line width=0.5mm, black] (-11,3) -- (-8,9) ;
\draw[line width=0.5mm, black, dashed] (3,-11) -- (6,-5) ;
\draw[line width=0.5mm, black, dashed] (4.5,-8) -- (6,-5) ;
\draw[line width=0.5mm, black] (3,3) -- (6,9) ;

\draw[black] (3,-11) -- (6,9) ;
\draw[black] (-11,-11) -- (-8,9) ;
\end{scope}
\begin{scope}[yscale = .25/2, xscale = .25/2, shift={(-5,15)}]
\draw[line width=0.5mm,black] (-11,3) -- (3,3) ;
\draw[line width=0.5mm,black] (-11,-11) -- (3,-11) ;
\draw[line width=0.5mm,black] (-11,3) -- (-11,-11) ;
\draw[line width=0.5mm,black] (3,3) -- (3,-11) ;

\draw[line width=0.5mm,black] (-8,9) -- (6,9) ;
\draw[line width=0.5mm,black, dashed] (-8,-5) -- (6,-5) ;
\draw[line width=0.5mm,black, dashed] (-8,9) -- (-8,-5) ;
\draw[line width=0.5mm,black] (6,-5) -- (6,9) ;

\draw[line width=0.5mm,black, dashed] (-11,-11) -- (-8,-5) ;
\draw[line width=0.5mm,black] (-11,3) -- (-8,9) ;
\draw[line width=0.5mm,black] (3,-11) -- (6,-5) ;
\draw[line width=0.5mm,black] (3,3) -- (6,9) ;

\draw[black] (3,-11) -- (6,9) ;
\draw[black] (-11,-11) -- (-8,9) ;
\end{scope}
\begin{scope}[yscale = .25/2, xscale = .25/2, shift={(-19,15)}]
\draw[line width=0.5mm,black, ] (-11,3) -- (3,3) ;
\draw[line width=0.5mm,black] (-11,-11) -- (3,-11) ;
\draw[line width=0.5mm,black] (-11,3) -- (-11,-11) ;

\draw[line width=0.5mm,black] (-8,9) -- (6,9) ;
\draw[line width=0.5mm,black,dashed] (-8,-5) -- (6,-5) ;
\draw[line width=0.5mm,black,dashed] (-8,9) -- (-8,-5) ;

\draw[line width=0.5mm,black, dashed] (-11,-11) -- (-8,-5) ;
\draw[line width=0.5mm, black] (-11,3) -- (-8,9) ;
\draw[line width=0.5mm,black, dashed] (3,-11) -- (6,-5) ;
\draw[line width=0.5mm,black] (3,3) -- (6,9) ;

\draw[black] (3,-11) -- (6,9) ;
\draw[black] (-11,-11) -- (-8,9) ;
\end{scope}

\begin{scope}[yscale = .25/2, xscale = .25/2, shift={(-28,-45)}]
\draw[line width=0.5mm,black, ] (-11,3) -- (3,3) ;
\draw[line width=0.5mm,black] (-11,-11) -- (3,-11) ;
\draw[line width=0.5mm,black] (-11,3) -- (-11,-11) ;

\draw[line width=0.5mm,black] (-8,9) -- (6,9) ;
\draw[line width=0.5mm,black,dashed] (-8,-5) -- (6,-5) ;
\draw[line width=0.5mm,black,dashed] (-8,9) -- (-8,-5) ;

\draw[line width=0.5mm,black, dashed] (-11,-11) -- (-8,-5) ;
\draw[line width=0.5mm, black] (-11,3) -- (-8,9) ;
\draw[line width=0.5mm,black, dashed] (3,-11) -- (6,-5) ;
\draw[line width=0.5mm,black] (3,3) -- (6,9) ;

\draw[black] (3,-11) -- (6,9) ;
\draw[black] (-11,-11) -- (-8,9) ;
\end{scope}

\begin{scope}[yscale = .25/2, xscale = .25/2, shift={(-14,-45)}]
\draw[line width=0.5mm,black] (-11,3) -- (3,3) ;
\draw[line width=0.5mm,black] (-11,-11) -- (3,-11) ;
\draw[line width=0.5mm,black] (-11,3) -- (-11,-11) ;
\draw[line width=0.5mm,black] (3,3) -- (3,-11) ;

\draw[line width=0.5mm,black] (-8,9) -- (6,9) ;
\draw[line width=0.5mm,black, dashed] (-8,-5) -- (6,-5) ;
\draw[line width=0.5mm,black, dashed] (-8,9) -- (-8,-5) ;
\draw[line width=0.5mm,black] (6,-5) -- (6,9) ;

\draw[line width=0.5mm,black, dashed] (-11,-11) -- (-8,-5) ;
\draw[line width=0.5mm,black] (-11,3) -- (-8,9) ;
\draw[line width=0.5mm,black] (3,-11) -- (6,-5) ;
\draw[line width=0.5mm,black] (3,3) -- (6,9) ;

\draw[black] (3,-11) -- (6,9) ;
\draw[black] (-11,-11) -- (-8,9) ;
\end{scope}

\begin{scope}[yscale = .25/2, xscale = .25/2, shift={(3,-25)}]
\draw[line width=0.5mm,black] (-11,3) -- (3,3) ;
\draw[line width=0.5mm,black] (-11,-11) -- (3,-11) ;
\draw[line width=0.5mm,black] (-11,3) -- (-11,-11) ;
\draw[line width=0.5mm,black] (3,3) -- (3,-11) ;

\draw[line width=0.5mm,black] (-8,9) -- (6,9) ;
\draw[line width=0.5mm,black, dashed] (-8,-5) -- (6,-5) ;
\draw[line width=0.5mm,black, dashed] (-8,9) -- (-8,-5) ;
\draw[line width=0.5mm,black] (6,-5) -- (6,9) ;

\draw[line width=0.5mm,black, dashed] (-11,-11) -- (-8,-5) ;
\draw[line width=0.5mm,black] (-11,3) -- (-8,9) ;
\draw[line width=0.5mm,black] (3,-11) -- (6,-5) ;
\draw[line width=0.5mm,black] (3,3) -- (6,9) ;

\draw[black] (3,-11) -- (6,9) ;
\draw[black] (-11,-11) -- (-8,9) ;
\end{scope}
\begin{scope}[yscale = .25/2, xscale = .25/2, shift={(6,-5)}]
\draw[line width=0.5mm,black] (-11,3) -- (3,3) ;
\draw[line width=0.5mm,black] (-11,-11) -- (3,-11) ;
\draw[line width=0.5mm,black] (-11,3) -- (-11,-11) ;
\draw[line width=0.5mm,black] (3,3) -- (3,-11) ;

\draw[line width=0.5mm,black] (-8,9) -- (6,9) ;
\draw[line width=0.5mm,black, dashed] (-8,-5) -- (6,-5) ;
\draw[line width=0.5mm,black, dashed] (-8,9) -- (-8,-5) ;
\draw[line width=0.5mm,black] (6,-5) -- (6,9) ;

\draw[line width=0.5mm,black, dashed] (-11,-11) -- (-8,-5) ;
\draw[line width=0.5mm,black] (-11,3) -- (-8,9) ;
\draw[line width=0.5mm,black] (3,-11) -- (6,-5) ;
\draw[line width=0.5mm,black] (3,3) -- (6,9) ;

\draw[black] (3,-11) -- (6,9) ;
\draw[black] (-11,-11) -- (-8,9) ;
\end{scope}
\begin{scope}[yscale = .25/2, xscale = .25/2, shift={(-36,-5)}]
\draw[line width=0.5mm,black] (-11,3) -- (3,3) ;
\draw[line width=0.5mm,black] (-11,-11) -- (3,-11) ;
\draw[line width=0.5mm,black] (-11,3) -- (-11,-11) ;
\draw[line width=0.5mm,black] (3,3) -- (3,-11) ;

\draw[line width=0.5mm,black] (-8,9) -- (6,9) ;
\draw[line width=0.5mm,black, dashed] (-8,-5) -- (6,-5) ;
\draw[line width=0.5mm,black, dashed] (-8,9) -- (-8,-5) ;

\draw[line width=0.5mm,black, dashed] (-11,-11) -- (-8,-5) ;
\draw[line width=0.5mm,black] (-11,3) -- (-8,9) ;
\draw[line width=0.5mm,black, dashed] (3,-11) -- (6,-5) ;
\draw[line width=0.5mm,black] (3,3) -- (6,9) ;

\draw[black] (3,-11) -- (6,9) ;
\draw[black] (-11,-11) -- (-8,9) ;
\end{scope}
	
\begin{scope}[yscale = .25/2, xscale = .25/2, shift={(-39,-25)}]
\draw[line width=0.5mm,black] (-11,3) -- (3,3) ;
\draw[line width=0.5mm,black] (-11,-11) -- (3,-11) ;
\draw[line width=0.5mm,black] (-11,3) -- (-11,-11) ;
\draw[line width=0.5mm,black] (3,3) -- (3,-11) ;

\draw[line width=0.5mm,black] (-8,9) -- (6,9) ;
\draw[line width=0.5mm,black, dashed] (-8,-5) -- (6,-5) ;
\draw[line width=0.5mm,black, dashed] (-8,9) -- (-8,-5) ;
\draw[line width=0.5mm,black, dashed] (6,9) -- (6,-5) ;

\draw[line width=0.5mm,black, dashed] (-11,-11) -- (-8,-5) ;
\draw[line width=0.5mm,black] (-11,3) -- (-8,9) ;
\draw[line width=0.5mm,black, dashed] (3,-11) -- (6,-5) ;
\draw[line width=0.5mm,black] (3,3) -- (6,9) ;

\draw[black] (3,-11) -- (6,9) ;
\draw[black] (-11,-11) -- (-8,9) ;
\end{scope}
\begin{scope}[yscale = .25/2, xscale = .25/2, shift={(-33,15)}]

\draw[black] (3,-11) -- (6,9) ;
\draw[black] (-11,-11) -- (-8,9) ;
\draw[black] (-8,9)--(6,9);
\draw[black] (-11,-11)--(3,-11);
\end{scope}

\begin{scope}[yscale = .25/2, xscale = .25/2, shift={(9,15)}]

\draw[black] (3,-11) -- (6,9) ;
\draw[black] (-11,-11) -- (-8,9) ;
\draw[black] (-8,9)--(6,9);
\draw[black] (-11,-11)--(3,-11);
\end{scope}
\begin{scope}[yscale = .25/2, xscale = .25/2, shift={(-42,-45)}]

\draw[black] (3,-11) -- (6,9) ;
\draw[black] (-11,-11) -- (-8,9) ;
\draw[black] (-8,9)--(6,9);
\draw[black] (-11,-11)--(3,-11);
\end{scope}
\begin{scope}[yscale = .25/2, xscale = .25/2, shift={(0,-45)}]

\draw[black] (3,-11) -- (6,9) ;
\draw[black] (-11,-11) -- (-8,9) ;
\draw[black] (-8,9)--(6,9);
\draw[black] (-11,-11)--(3,-11);
\end{scope}
\begin{scope}[yscale = .75, xscale = .75, shift={(-0.5,-.8)}]

\draw[black] (2.73,-12.8) -- (5.9,9) ;
\draw[black] (-11.05,-12.8) -- (-7.55,9) ;

\draw[black] (3 ,4.8) -- (3.6,9) ;
\draw[black] (1.9 ,4.8) -- (2.5,9) ;

\draw[black] (.7, 4.8) -- (1.34,9) ;
\draw[black] (-.55, 4.85) -- (.1,9) ;
\draw[black] (-10,7)--(7.8,7);

\draw[black] (-10.15,6)--(7.7,6);
\draw[black] (-9.8,8)--(8,8);
\draw[black] (-9.7,8.5)--(8.1,8.5);
\draw[black] (-9.9,7.5)--(7.85,7.5);
\draw[black] (-1.65 ,4.8) -- (-1,9) ;
\draw[black] (-6.3,4.8) -- (-5.7,9) ;
\draw[black] (-2.7 ,4.85) -- (-2.1, 9) ;
\draw[black] (-3.9, 4.85) -- (-3.3,9) ;
\draw[black] (-5, 4.8) -- (-4.4, 9) ;

\draw[black] (-13.15,-11)--(5.3,-11);
\draw[black] (-6.5, 7) -- (-6.2,9) ;
\draw[black] (-5.3, 7) -- (-5,9) ;
\draw[black] (-4.1, 7) -- (-3.8,9) ;
\draw[black] (-3, 7) -- (-2.7,9) ;
\draw[black] (-1.8, 7) -- (-1.5,9) ;
\draw[black] (-.8, 7) -- (-.5,9) ;
\draw[black] (.5, 7) -- (.8,9) ;
\draw[black] (1.6, 7) -- (1.9,9) ;
\draw[black] (2.7, 7) -- (3,9) ;
\draw[black] (3.9, 7) -- (4.2,9) ;
\draw[black] (5.1, 7) -- (5.4,9) ;

\end{scope}
\begin{scope}[yscale = .75, xscale = .75, shift={(-3.1,-18.1)}]

\draw[black] (3 ,4.5) -- (3.6,8.85) ;
\draw[black] (-6.3,4.5) -- (-5.7,8.87) ;
\draw[black] (-1.65 ,4.4) -- (-1,8.85) ;
\draw[black] (.7, 4.5) -- (1.34,8.9) ;
\draw[black] (-3.9, 4.4) -- (-3.3,8.9) ;
\end{scope}
\begin{scope}[yscale = .75, xscale = .75, shift={(-3.5,-20.5)}]

\draw[black] (-6.5, 6.85) -- (-6.2,8.7);
\draw[black] (-5.3, 6.85) -- (-5,8.7);
\draw[black] (-4.1, 6.85) -- (-3.8,8.7);
\draw[black] (-4.6, 6.9) -- (-4.051,11.2);
\draw[black] (-2.4, 6.9) -- (-1.8,11.2);
\draw[black] (-.1, 6.9) -- (.5,11.2);
\draw[black] (2.2, 6.9) -- (2.8,11.2);
\draw[black] (-3, 6.85) -- (-2.7,8.7);
\draw[black] (-1.82, 6.9) -- (-1.6,8.7);
\draw[black] (-.6, 6.9) -- (-.33,8.7);
\draw[black] (.5, 6.9) -- (.8,8.7);
\draw[black] (1.6, 6.9) -- (1.9,8.7);
\draw[black] (2.8, 6.9) -- (3.1,8.7);
\draw[black] (3.95, 6.9) -- (4.25,8.7);
\draw[black] (5.1, 6.9) -- (5.4,8.7);
\end{scope}
\begin{scope}[yscale = .75, xscale = .75, shift={(-.3,.7)}]
9);
\draw[black] (-12.85,-10.04)--(-8.6,-10.04);
\draw[black] (-12.3,-6.73)--(-8,-6.73);
\draw[black] (-11.7,-3.37)--(-7.4,-3.36);
\draw[black] (-11.15,-0.051)--(-6.9,-0.051);
\draw[black] (-10.56,3.29)--(-6.6,3.29);
\end{scope}
\begin{scope}[yscale = .75, xscale = .75, shift={(-5.3,-19)}]
9);
\draw[black] (-8.6,5.8)--(9.92,5.8);
\draw[black] (-8.1,8.4)--(10.26,8.4);
\draw[black] (-8.43,6.75)--(10.1,6.75);
\draw[black] (-8.5,6.3)--(9.99,6.3);

\end{scope}

\begin{scope}[yscale = .75, xscale = .75, shift={(13.3,.7)}]
9);
\draw[black] (-12.8,-10.04)--(-8.1,-10.04);
\draw[black] (-12.6,-8.4)--(-7.87,-8.4);
\draw[black] (-26.23,-8.4)--(-21.92,-8.4);
\draw[black] (-25.6,-5)--(-21.45,-5);
\draw[black] (-12,-5)--(-7.4,-5);
\draw[black] (-12.3,-6.72)--(-7.6,-6.73);
\draw[black] (-11.8,-3.37)--(-7.2,-3.37);
\draw[black] (-11.3,-0.04)--(-6.7,-0.051);
\draw[black] (-11.5,-1.82)--(-6.92,-1.82);
\draw[black] (-25,-1.82)--(-20.9,-1.82);
\draw[black] (-10.75,3.3)--(-6.25,3.285);
\draw[black] (-24.4, 1.65)--(-20.4, 1.65);
\draw[black] (-11, 1.65)--(-6.45, 1.65);

\draw[black] (-8.25, 5)--(-6.05, 5);
\draw[black] (-23.85, 5)--(-21.8, 5);

\draw[black] (-8.4, 3.9)--(-6.2, 3.9);
\draw[black] (-24.1, 3.9)--(-21.9, 3.9);

\draw[black] (-8.6, 2.5)--(-6.35, 2.5);
\draw[black] (-24.3, 2.5)--(-22.2, 2.5);

\draw[black] (-8.9, .8)--(-6.54, .8);
\draw[black] (-24.6, .8)--(-22.4, .8);

\draw[black] (-9.15, -1)--(-6.9, -1);
\draw[black] (-24.9, -1)--(-22.7, -1);

\draw[black] (-9.35, -2.55)--(-7, -2.55);
\draw[black] (-25.2, -2.55)--(-23, -2.55);

\draw[black] (-9.6, -4.15)--(-7.3, -4.15);
\draw[black] (-25.5, -4.15)--(-23.2, -4.15);

\draw[black] (-9.8, -5.8)--(-7.46, -5.8);
\draw[black] (-25.7, -5.8)--(-23.5, -5.8);

\draw[black] (-10.08, -7.5)--(-7.7, -7.5);
\draw[black] (-26, -7.5)--(-23.8, -7.5);

\draw[black] (-10.3, -9.2)--(-7.95, -9.2);
\draw[black] (-26.3, -9.2)--(-24, -9.2);

\draw[black] (-10.5, -10.7)--(-8.2, -10.7);
\draw[black] (-26.6, -10.7)--(-24.3, -10.7);

\draw[black] (-10.7, -11.9)--(-8.35, -11.9);
\draw[black] (-26.8, -11.9)--(-24.5, -11.9);

\end{scope}
\end{tikzpicture} 
    $$\textbf{Figure 4}:  \text{ Construction of child cuboids of Q}.$$
\end{center}
	\item[\textbf{Part (4)}] Divide $[z_j,z_j+h_j]$ into $N$ equal parts and to each of these intervals assign a label from the $N$ labels in Part 2 so that no two intervals  have the same label.  Let the interval with label, $\omega$, be $I_{j, \omega}.$ If the label of a cube $C_{j,k}$ is also $\omega$, then we consider the corresponding cuboid
	\begin{align*}
		S_{j,k}=C_{j,k}\times I_{j,\omega}.
	\end{align*}
	\item[\textbf{Part (5)}] Let $D_{j,k}$ be the diagonal plane of $S_{j,k}$ parallel to the $(1-a_n)z=\alpha x_1$ plane. On the boundary of $C_{j,k}$, we define $f$ so that its graph coincides with the boundary of $D_{j,k}$, i.e if $x$ lies on the boundary of $C_{j,k}$, then  $(x,f(x))$ lies on $D_{j,k}$. Similarly, we define the graph of $f$ over the boundary of $C_{j}$ to be the boundary of $D_j$. We then extend $f$ piece-wise linearly on the narrow strips defined in Part (2) so that it is continuous, and imitating the construction of Theorem \ref{OneD}, we define $f$ to be constant near the top and bottom faces of $S_{j,k}$ so that the Hausdorff dimension of each of the level sets containing these faces is $m$. This will ensure that the level sets associated to the top and bottom faces are not $(m-1)$-rectifiable.
	\item[\textbf{Part (6)}] The countable collection of cuboids thus obtained, i.e, $\{S_{j,k}:Q=Q_{n,i},  i, j \in \mathbb{N}, k\in \{1,2..,N^{m}\}\}$ will form our new collection of cuboids. We re-enumerate them as $\{Q_{n+1,j}\}_{j \in \mathbb{N}}.$ \par
This completes the $(n+1)^{th}$ step. 
\end{enumerate}

For $x\in [0,1]^{m}\setminus \bigcup_{n=1}^{\infty}E_n$, we define the point $(x, f(x))$ to be the intersection of all the cuboids $Q_{n,i}$ whose projection onto the plane $z=0$ contains $x$. Thus the function is continuous for every $[0,1]^{m}$.
We claim that the function thus obtained has $l_{f}(x) <\infty$ and each level set is not rectifiable. \par 

Clearly if $(x,f(x))$ lies on the piece-wise linear parts of the graph of $f$ then $l_{f}(x)<\infty.$ It remains to prove so for $(x,f(x))$ which lie either (1) on the boundary of a diagonal plane or (2) in infinitely many cuboids $Q_{n,i}$. \par 

We observe that for each $n \in \mathbb{N}$, the  height/length ratio of the cuboids, $Q_{n,i}$, increases by a factor of $1/(1-a_{n})$ after the $(n+1)$th step. Since we start with the cube $[0,1]^{m+1}$, the height/length ratio for all the cuboids is bounded by $\prod_{n=0}^{\infty}(1-a_{n})^{-1}$. We recall that we've chosen $(a_n)_{n=0}^{\infty}$ such that $\prod_{n=0}^{\infty}(1-a_{n})^{-1}< 2$.

Now for the first case, let $(x,f(x))$ lie on the boundary of the diagonal plane of a cuboid $Q=Q_{n,i}$ and let $(y,f(y)) \in Q$. The graph of the function inside $Q$ lies in its child cuboids. Let $(y,f(y)) \in R_{j}$, where $R_{j}$ is a child cuboid of $ Q$ and let $(w,f(w))$ be a point on the diagonal plane of $R_{j}$ closest to $(x,f(x))$. By the properties of the Whitney decomposition and the diagonal planes, the length of the interval $I_{j}=\mathbb{P}_{z}(R_{j})$ is less than the distance between the boundary of $I=\mathbb{P}_{z}(Q)$ and $I_j$. Thus the height of the cuboid $R_{j}$ is less than $|f(w)-f(x)|.$ This implies $|f(y)-f(w)| \leq |f(w)-f(x)|$ and 
	\begin{align*}
		\frac{|f(y)-f(x)|}{|y-x|} \leq  \frac{|f(y)-f(w)|+|f(w)-f(x)|}{|y-x|} \leq  \frac{2|f(w)-f(x)|}{|y-x|}.
	\end{align*}
Since we have taken the closest vertex to $(x,f(x))$ we have $|y-x|\geq |w-x|$. Thus 
	\begin{align*}
		\frac{2|f(w)-f(x)|}{|y-x|} \leq \frac{2|f(w)-f(x)|}{|w-x|} < c,
	\end{align*}
for some constant $c$. Thus $l_{f}(x) < \infty$. 


For the second case, we estimate $l_{f}(y)$ where $(y,f(y))$ lies inside infinitely many cuboids $Q_n:=Q_{n,i}$ of arbitrarily small length. Let $(x_n,f(x_n))$ be a point on the diagonal plane of $Q_n$ such that if $s_n:=|y-x_n|$, then $B(y, s_n) \subset \mathbb{P}_x(Q_n)$. From above, we know that  if $(x_n, f(x_n))$ lies on the boundary of $Q_n$, then $\frac{|f(x_n)-f(v)|}{|x_n-v|}<c$ for every $v \in \mathbb{P}_x(Q_n)$. Thus for $v\in B(y,s_n) \subset \mathbb{P}_x(Q_n)$ we have
\begin{align*}
    |f(y)-f(v)|  & \leq |f(y)-f(x_n)| + |f(x_n)-f(v)| \\
    & < c|y-x_n|+c|x_n-v| \\ 
    & < c|y-x_n|+c|x_n-y|+c|y-v| \leq 3cs_n.
\end{align*}
Therefore, $l_{f}(y)< \infty$ and we have shown that the lower-scaled oscillation of $f$ is bounded in every case.

It remains to prove that the level sets are not rectifiable. 
Define 
	\begin{align*}
		F=\{(x,f(x)) : (x,f(x)) \text{ lies in infinitely many cuboids } Q_{n,i}\}
	\end{align*}
	 and let $F_z=f^{-1}(z) \cap F$. By property (ii) in Part (2) of the cuboid construction, we see that $F_{z}$ has full $\mathbb{P}_y$-projection. Thus ${\mathcal{H}}^{m-1}(F_z)>0$. We claim that $F_{z}$ is unrectifiable and thus $f^{-1}(z)$ is unrectifiable.
	 
Consider $x \in F_z$. Then there exists some $N$ such that $n>N$ implies  that  $x \in Q_{n}$ for some cuboid, $Q_n:=Q_{n, i}$, constructed at step $n$. By the construction of  $Q_{n}$ and $Q_{n+1}$ (Part (2)), we can say that for $r_n= \sqrt{m}\cdot l(Q_{n+1})$, $\mathbb{P}_x(Q_{n+1}) \subset B(x,r_n) \subset B(x,2r_n) \subset \mathbb{P}_x(Q_n)$. In addition, the labeling argument of Part (4) and Part (5) imply that
	\begin{align*}
		F_{z} \cap B(x,2r_n)=F_{z} \cap B(x,r_n). 
	\end{align*}
Thus,
	\begin{align*}
		{D}^{m-1}(F'_{z},x,2r_n)&=\frac{{{\mathcal{H}}^{m-1}(F'_{z}\cap B(x,2r_n))}}{(4r_n)^{m-1}}\\
		&=2^{-m+1}\frac{{{\mathcal{H}}^{m-1}(F'_{z}\cap B(x,r_n))}}{(2r_n)^{m-1}} = 2^{-m+1}{D}^{m-1}(F'_{z},x,r_n) .
	\end{align*}
Since this holds for  a sequence of scales, $r_n$, that converges to zero, both densities, ${D}^{m-1}(F'_{z},x,2r_n)$ and ${D}^{m-1}(F'_{z},x,r_n)$, cannot converge to 1. Therefore, the Hausdorff density of $F_z$ is not 1 at any point in $F_z$ which implies that $F_z$ is not rectifiable by Theorem \ref{densitythm}. This proves that $f^{-1}(z)$ is unrectifiable for all $z \in [0,1]\setminus E$, where $E$ is set of the level sets intersecting the top and bottom faces of the cuboids $\{Q_{n,i}\}_{n,i \in \mathbb{N}}.$ Finally, according to Part (5) of the cuboid construction, for every $z \in E$, $f^{-1}(z)$ has dimension $m$.

\end{proof}


\bibliography{ACWBIB} 
\bibliographystyle{plain}

\end{document}